\documentclass[11pt,reqno]{amsart}
\usepackage[T1]{fontenc}
\usepackage{lmodern,amsmath,amssymb,amsthm,mathtools,microtype}
\usepackage[margin=1in]{geometry}
\usepackage{xcolor}
\usepackage[colorlinks=true,linkcolor=blue!50!black,citecolor=blue!50!black,urlcolor=blue!50!black]{hyperref}
\usepackage{zref-clever}
\zcsetup{cap=true,abbrev=false}
\usepackage{aliascnt}
\newtheorem{theorem}{Theorem}[section]
\newaliascnt{lemma}{theorem}
\newtheorem{lemma}[lemma]{Lemma}
\aliascntresetthe{lemma}
\newaliascnt{corollary}{theorem}
\newtheorem{corollary}[corollary]{Corollary}
\aliascntresetthe{corollary}
\newaliascnt{remark}{theorem}
\theoremstyle{remark}
\newtheorem{remark}[remark]{Remark}
\aliascntresetthe{remark}
\theoremstyle{plain}
\newcommand{\E}{\mathbb E}
\newcommand{\Pp}{\mathbb P}
\newcommand{\R}{\mathbb R}
\newcommand{\cR}{\mathcal R}
\DeclareMathOperator{\Cov}{Cov}
\DeclareMathOperator{\Var}{Var}
\DeclareMathOperator{\Ber}{Bernoulli}
\newcommand{\one}{\mathbf 1}
\newcommand{\norminf}[1]{\lVert #1\rVert_\infty}
\newcommand{\dd}{\,\mathrm du}
\allowdisplaybreaks[1]
\title[Optimal central limit theorem in high dimensions]{Optimal central limit theorem for bounded random variables in high dimensions}
\author{P.\ M.\ Aronow}
\author{Patrick Lopatto}
\date{\today}

\begin{document}
\begin{abstract}
Let $W=n^{-1/2}\sum_{i=1}^n X_i$, where the $X_i$ are independent centered random vectors in
$\R^p$ with $|X_{ij}|\le B$ almost surely. Suppose that $\Cov(W)$ has unit diagonal and smallest
eigenvalue at least $b^2>0$.
We prove that the distance between $W$ and a Gaussian vector with the same covariance, uniformly
over axis-aligned rectangles, is at most $C\min\{1,b^{-2}Bn^{-1/2}\log^{3/2}(ep)\}$. 
For fixed $b$, the dependence on summand size and dimension matches known lower bounds in
growing-dimensional regimes.
The proof combines a concentration estimate near rectangle boundaries with a carefully chosen
Gaussian comparison. 
\end{abstract}
\maketitle

\section{Introduction}\label{sec:introduction}

Many high-dimensional statistical procedures depend on the joint behavior of a large number of coordinates.
For example, simultaneous confidence regions, family-wise error control in multiple testing, and the
calibration of penalty parameters all require control of probabilities of events defined by many coordinatewise
inequalities. A natural approach is to replace the underlying sum by a Gaussian vector with the same
covariance and to control the approximation uniformly over such events
\cite{CCK2013,CCK2017,CCK2023}.
We focus on axis-aligned rectangles, a class broad enough to cover the applications above
but structured enough for accurate Gaussian approximation even when the dimension $p$ is much larger
than the sample size $n$. The usefulness of the central limit theorem for this class therefore depends on its
quantitative behavior as the dimension grows. We consider here sums of bounded independent random vectors, and we are interested in how the approximation error depends on the dimension and on the size of the summands.

Recent work has obtained increasingly general high-dimensional Gaussian
approximation results and, under additional structural assumptions, nearly
$n^{-1/2}$ rates. Chernozhukov, Chetverikov and Kato \cite{CCK2013} established Gaussian and
multiplier-bootstrap approximations for maxima of sums of high-dimensional random vectors, and their
later work \cite{CCK2017} treated approximation uniformly over rectangles. Subsequent results obtained
nearly parametric dependence on the sample size under stronger or different structural assumptions.
Fang and Koike \cite{FK2021} considered independent identically distributed log-concave summands,
Lopes \cite{Lopes2022} treated independent identically distributed vectors with sub-Gaussian or
sub-exponential coordinates, and Kuchibhotla and Rinaldo \cite{KR2020} allowed non-identically
distributed summands but imposed nondegeneracy conditions on the individual summand covariances. Fang and Koike subsequently obtained rectangle CLTs for i.i.d.\
log-concave summands with arbitrary, possibly singular, covariance matrices
\cite{FK2024}. 
These results are not directly comparable through their dependence on $n$ alone, as the dependence on
the dimension and the form of the covariance assumptions are also essential.

For the bounded setting considered here, the closest result is due to Chernozhukov, Chetverikov and
Koike \cite[Theorem 2.1 and Corollary 2.1]{CCK2023}. For a normalized sum
$n^{-1/2}\sum_i X_i$ of independent centered vectors with $|X_{ij}|\le B$, they obtained a rate of 
\[
 C\frac{B\log^{3/2}(ep)\log n}{b^2\sqrt n},
\]
where $C$ is an absolute constant, provided that $n,p\ge3$ and the covariance of the sum has unit diagonal
and is bounded below by $b^2I_p$. Their Proposition 2.1 also gives lower bounds of order
$B\log^{3/2}(p)/\sqrt n$ in growing-dimensional regimes. Thus, for bounded summands and under
aggregate covariance nondegeneracy, the dependence on the dimension and summand size was already
sharp, while the upper bound retained an additional factor of $\log n$.

The same authors removed that factor for sums convolved with an independent, uniformly nondegenerate
Gaussian \cite[Theorem 4.1 and Remark 4.1]{CCK2023}.
This smoothing assumption excludes general bounded sums, which may have atoms.
We remove the factor $\log n$ without Gaussian smoothing, under the same condition on the
aggregate covariance and with the same dependence $b^{-2}$ as in \cite[Corollary 2.1]{CCK2023}.

\subsection{Main result}

Let $\cR_p$ be the class of rectangles $\prod_{j=1}^p(\ell_j,u_j]$,
where $-\infty\le\ell_j<u_j\le\infty$. For random vectors $U,V$ in
$\R^p$, define
\[
 \rho_{\cR}(U,V)
 =\sup_{A\in\cR_p}|\Pp(U\in A)-\Pp(V\in A)|.
\]

\begin{theorem}\label{thm:main}
Let $n,p$ be positive integers, $b\in(0,1]$, and $B>0$.
Let $X_1,\ldots,X_n$ be independent centered random vectors in $\R^p$ with
$|X_{ij}|\le B$ almost surely, and set
\[
 W=\frac1{\sqrt n}\sum_{i=1}^nX_i,\qquad
 \Sigma=\Cov(W),\qquad Z\sim N(0,\Sigma).
\]
If $\Sigma_{jj}=1$ for every $j$ and $\Sigma\succeq b^2I_p$, then
\begin{equation}\label{eq:main}
 \rho_{\cR}(W,Z)\le C\min\left\{1,\frac{B\log^{3/2}(ep)}{b^2\sqrt n}\right\},
\end{equation}
where $C$ is an absolute constant.
\end{theorem}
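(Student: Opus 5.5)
Our plan is to follow the smoothing–plus–anti-concentration strategy of \cite{CCK2023}, but to organize the induction so that no factor $\log n$ is lost. Set $\delta = B\log^{3/2}(ep)/(b^2\sqrt n)$. We may assume $\delta\le c_0$ for a small absolute constant $c_0$, since otherwise \eqref{eq:main} is trivial with $C\ge 1/c_0$. The argument has three steps.

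\textbf{Step 1 (Gaussian comparison that absorbs the smoothing).} Write $\Sigma = b^2 I_p/2 + (\Sigma - b^2I_p/2)$. This lets us realize $Z \overset{d}{=} Z' + G$, where $G\sim N(0,(b^2/2) I_p)$ is independent of $Z'\sim N(0,\Sigma-b^2I_p/2)$. Now compare $W+\sqrt t\,G$ with $Z'+\sqrt{t}\,G+\ldots$ along a carefully chosen interpolation. Concretely, we use the smoothed result \cite[Theorem 4.1]{CCK2023} in the form
\[
\rho_{\cR}\bigl(W+\sigma G,\;Z+\sigma G\bigr)\lesssim \frac{B\log^{3/2}(ep)}{b^2\sqrt n},
\]
where the smoothing level is $\sigma^2 \asymp \epsilon^2/\log(ep)$ for a scale $\epsilon$ chosen below. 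The Gaussian-to-Gaussian step, from $Z+\sigma G$ to $Z$, costs $O(\sigma^2 \log(ep)/b^2)$ by Gaussian comparison on rectangles, because the covariance perturbation is $\sigma^2 I$ and the minimal eigenvalue is $b^2$. We choose $\sigma$ so that this is at most $\delta$.

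\textbf{Step 2 (desmoothing via boundary concentration of $W$ itself).} Smoothing by $\sigma G$ changes rectangle probabilities of $W$ by at most $\Pp(W\in A^{\epsilon}\setminus A^{-\epsilon}) + \Pp(\norminf{\sigma G}>\epsilon)$ with $\epsilon\asymp \sigma\sqrt{\log(ep)}$. The tail term is negligible for this choice of $\epsilon$. The key estimate is a \emph{concentration inequality near rectangle boundaries for $W$}:
\[
\sup_{A}\Pp\bigl(W\in A^{\epsilon}\setminus A^{-\epsilon}\bigr)\le C\Bigl(\frac{\epsilon\sqrt{\log(ep)}}{b}+\delta\Bigr).
\]
We prove it by comparing with the Gaussian. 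Nazarov's anticoncentration gives $\epsilon\sqrt{\log(ep)}$ for $Z$. For $W$ we apply a Stein/Lindeberg exchange with a smooth indicator of the $\epsilon$-shell, using the third-moment bound $|X_{ij}|\le B$. We handle the resulting error terms by a recursive (bootstrapping) argument: the error involves $\Pp(W\in\text{shell of width }\asymp\epsilon+ B/\sqrt n)$ multiplied by $B\log^{3/2}(ep)/(\epsilon\sqrt n)$-type factors, and we close the recursion as in Chen--Goldstein--Shao concentration inequalities.

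\textbf{Step 3 (choosing the scale).} We take $\epsilon\asymp B\log(ep)/(b\sqrt n)$, the natural step size of one summand up to logs. Then $\epsilon\sqrt{\log(ep)}/b\lesssim\delta$, and the Step 1 costs are also $O(\delta)$. Combining the three estimates gives \eqref{eq:main}.

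The main obstacle is Step 2. A naive Lindeberg argument with smoothing at scale $\epsilon$ produces derivative bounds of order $\log^{3/2}(ep)/\epsilon^{3}$, and summing over the $n$ summands reintroduces a $\log n$ through the recursion in \cite{CCK2023}. Our first attempt will be to carry out the recursion directly on the shell probability, which is a single quantity bounded uniformly over rectangles, rather than on $\rho_{\cR}$. We will use leave-one-out vectors $W^{(i)}$ so that each summand's contribution lives at scale $B/\sqrt n\lesssim\epsilon$. The resulting inequality is then of the form $x\le C\delta + \tfrac12 x$, which closes without iteration over dyadic scales.
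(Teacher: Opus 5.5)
Your outline has the same architecture as the paper: smooth at level $\sigma^2\asymp B^2\log(ep)/(b^2n)$, compare the smoothed laws, then desmooth using boundary concentration for $W$ and Nazarov's inequality for $Z$. However, both load-bearing steps are assumed rather than proved.

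\emph{Step 1.} \cite[Theorem 4.1]{CCK2023} concerns sums convolved with a \emph{uniformly nondegenerate} Gaussian, i.e.\ smoothing at a fixed level. It gives no bound on $\rho_{\cR}(W+\sigma G,Z+\sigma G)$ that is uniform as $\sigma\to0$. Your decomposition $Z\stackrel{d}{=}Z'+G$ does not help, because $W$ has no Gaussian component to match. A $\sigma$-uniform bound at the level you need is essentially the whole theorem. With $L=\log(ep)$, consider a standard interpolation from $W+G_{t\Sigma}$ to $G_{(1+t)\Sigma}$, for instance thinning the summands without rescaling. There the cubic Taylor term is of order $b^{-2}(B/\sqrt n)L^{3/2}(\lambda+t)^{-1}$, and integrating it produces $\log(1+1/t)\asymp\log n$ for $t\asymp B^2L/(b^2n)$. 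The ``carefully chosen interpolation'' you mention but never specify is exactly the missing idea.

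The paper uses rescaled Bernoulli thinning, $V_\lambda=\sum_i(1+\lambda)B_{i,\lambda}X_i/\sqrt n$ with $B_{i,\lambda}\sim\Ber\bigl((1-\lambda)/(1+\lambda)^2\bigr)$, plus an independent $G_{(\lambda+t)\Sigma}$. Summand covariances are multiplied by $1-\lambda$, so the quadratic terms cancel against the heat equation. Third-moment tensors are multiplied by $1-\lambda^2$, so the cubic term carries a factor $\lambda$ and is bounded by a multiple of $b^{-2}(B/\sqrt n)L^{3/2}\lambda/(\lambda+t)$, which integrates uniformly in $t$. For small $\lambda$ the Gaussian part of this path is tiny. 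The averaged derivative bounds therefore need boundary concentration for the bounded thinned sums $V_\lambda$ themselves, not only for $W$ at the desmoothing stage.

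\emph{Step 2.} This is likewise not a proof. The Stein/Lindeberg exchange with a recursion on the shell probability is never carried out, and taken at face value it does not close. With your choice $\epsilon\asymp BL/(b\sqrt n)$, the factor $BL^{3/2}/(\epsilon\sqrt n)$ is of order $b\sqrt L$, not at most $1/2$. Forcing it below $1/2$ requires $\epsilon\gtrsim BL^{3/2}/\sqrt n$, after which the term $\epsilon\sqrt L/b$ exceeds your target by a factor of order $b\sqrt L$.

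The paper proves $\Pp(S\in A^a\setminus A^{-a})\le C(a+\eta)\sqrt L$ for every sum $S$ of independent centered vectors bounded by $\eta$ with unit coordinate variances. It needs no Gaussian comparison, no recursion, and no dependence on $b$. It realizes resampling of each summand with probability $\theta$ as Boolean sign flips and applies Kane's noise-sensitivity bound for intersections of $2p$ halfspaces \cite{Kane2014}. A conditional Berry--Esseen estimate then shows that, for $\theta\asymp(a+\eta)^2$, resampling pushes a point of the boundary layer across a face with probability bounded below.

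\emph{Desmoothing.} Your truncation step also loses. $\Pp(\norminf{\sigma G}>C\sigma\sqrt L)$ is only polynomially small in $p$ (a positive constant when $p$ is fixed), not of the order of your target $\delta$. Since the shell bound is linear in the width, you should instead average it over the Gaussian shift, using $\E\norminf{\sigma G}\le C\sigma\sqrt L$ when $G$ has unit coordinate variances.
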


The theorem includes summands with rank-one covariance matrices and atomic laws, and requires neither identically distributed
summands nor independent coordinates.
For fixed $B$ and $b$, the approximation error tends to zero whenever $\log p=o(n^{1/3})$.

For fixed $b$, the order in \eqref{eq:main} is optimal in growing-dimensional
regimes. For example, the conditions of \cite[Proposition 2.1]{CCK2023} hold for fixed $B=2$ and
$p_n=\lceil\exp(n^{1/4})\rceil$. Their construction has independent coordinates and hence aggregate
covariance $I_{p_n}$, and gives a lower bound of order $B\log^{3/2}(p_n)/\sqrt n$.

\subsection{Ideas of the proof}

For the proof, set $\xi_i=X_i/\sqrt n$ and $\delta=B/\sqrt n$, so that
$W=\sum_i\xi_i$ and $\norminf{\xi_i}\le\delta$, and write $L=\log(ep)$.
The proof is based on an interpolation between a smoothed version of the original sum and a Gaussian
vector with matching covariance. 
We interpolate so that, as the interpolation parameter $\lambda$ increases, the covariance of the
summands decreases while the Gaussian covariance increases by the same amount, keeping the total
covariance fixed. The Gaussian component has covariance $(\lambda+t)\Sigma$ at interpolation time
$\lambda$, where $t\Sigma$ is the amount of Gaussian smoothing present at the initial endpoint.

If each summand is retained independently with probability $1-\lambda$ and is
not rescaled, both its covariance and its third-moment tensor are multiplied
by $1-\lambda$. The cubic term in the differentiated interpolation then has an
averaged bound proportional to $(\lambda+t)^{-1}$, whose integral is
$\log(1+1/t)$. As explained below, the derivative estimates require
\[
 t\asymp \frac{\delta^2L}{b^2}
 =\frac{B^2\log(ep)}{b^2n},
\]
so $\log(1/t)$ contains exactly the kind of sample-size logarithm that we want to avoid.

We instead choose the interpolation so that the covariance changes linearly in $\lambda$, while the third
moments change only quadratically. Their derivative is therefore of order $\lambda$, and the cubic
term is bounded by a multiple of $\lambda / (\lambda + t)$, which is integrable uniformly in $t$.

The first technical ingredient is a concentration estimate near rectangle boundaries. 
For a sum of independent centered vectors with unit coordinate variances and summands bounded by
$\eta$, the probability of lying within distance $a$ of a rectangle boundary is at most
$C(a+\eta)\sqrt L$. We prove this by independently replacing each summand by a fresh copy with a small probability
and comparing Kane's noise-sensitivity bound \cite[Corollary 1]{Kane2014} with a lower bound on the
conditional probability that this perturbation moves a point near the rectangle boundary across one
of its faces;  see \zcref{sec:concentration}.

The second ingredient is a local Gaussian derivative estimate. Related estimates for
Gaussian-smoothed rectangle indicators appear in Fang and Koike
\cite[Lemmas 2.2 and 2.3]{FK2021} and in Chernozhukov, Chetverikov and Koike
\cite[Section 6]{CCK2023}. In the formulation used here, one takes a local supremum of each derivative entry and then sums over the entries; the resulting quantity is bounded by a certain boundary-layer probability, which can then be averaged using the preceding concentration estimate. 
At Gaussian covariance $s\Sigma$, the relevant boundary layer has width of order
$b\sqrt{s/L}$. The Taylor remainders evaluate Gaussian derivatives at arguments shifted by at most
a constant multiple of $\delta$, so this boundary layer must be at least of that size. This explains
the choice $t\asymp\delta^2L/b^2$ above.
Throughout the interpolation, either the bounded sum or the Gaussian component has coordinate
variances at least $1/2$; conditioning on the other therefore makes the boundary-concentration
estimate available along the entire path. See \zcref{sec:envelope}.

For $0\le\lambda\le1$, let
\[
 V_\lambda=\sum_i(1+\lambda)B_{i,\lambda}\xi_i,
 \qquad
 B_{i,\lambda}\sim\Ber\left(\frac{1-\lambda}{(1+\lambda)^2}\right),
\]
where the Bernoulli variables are mutually independent and independent of the summands.
Each summand's covariance is multiplied by $1-\lambda$, while its third-moment tensor is multiplied
by $1-\lambda^2$. Adding an independent Gaussian with covariance $(\lambda+t)\Sigma$ therefore keeps
the total covariance fixed at $(1+t)\Sigma$. Write
$P_A(\lambda)=\Pp(V_\lambda+G_{(\lambda+t)\Sigma}\in A)$, where $G_\Gamma\sim N(0,\Gamma)$ is independent of
the summands. Differentiating in $\lambda$, the leading covariance terms cancel, leaving
fourth-order remainders. The averaged derivative bounds then give
\[
 |P_A'(\lambda)|\le C\left[
 b^{-2}\delta L^{3/2}\frac{\lambda}{\lambda+t}
 +b^{-3}\delta^2L^2(\lambda+t)^{-3/2}\right].
\]
The cubic term integrates without a logarithm because $\lambda/(\lambda+t)\le1$, while the
fourth-order remainder contributes at most $Cb^{-3}\delta^2L^2/\sqrt t$. Hence
\[
 |P_A(1)-P_A(0)|
 \le Cb^{-2}\delta L^{3/2}
    +Cb^{-3}\frac{\delta^2L^2}{\sqrt t}
 \le Cb^{-2}\delta L^{3/2}.
\]
Since $P_A$ connects $W+G_{t\Sigma}$ to $G_{(1+t)\Sigma}$, it remains only to remove the smoothing.
The same boundary-concentration estimates bound this error by
$C(\sqrt t\,L+\delta\sqrt L)$, which is no larger than the claimed bound for the choice of $t$ above.  This completes the comparison; the details are in
\zcref{sec:comparison}.

\subsection*{Acknowledgments}

P.\ L.\ was partially supported by NSF grant DMS-2450004. This paper was written
by the authors with the assistance of large language models, which were used to
suggest arguments, contribute to drafting and revision, and write code
for computational checks.

\section{Rectangle concentration by resampling}\label{sec:concentration}

Throughout, $L=\log(ep)$.
Constants denoted by $C$ are absolute and may change between occurrences; subscripts indicate
dependence only on the displayed parameters.
For a rectangle $A=\prod_{j=1}^p(\ell_j,u_j]$ and a real number $a$, set
\[
 A^a=\prod_{j=1}^p(\ell_j-a,u_j+a].
\]
Infinite endpoints remain infinite. If this operation makes any coordinate interval empty, we take the resulting rectangle to be empty.  Write $\phi$
and $\Phi$ for the standard normal density and distribution function, and $\bar\Phi=1-\Phi$.

\begin{lemma}\label{lem:concentration}
Let $S=\sum_{i=1}^n\zeta_i$, where the $\zeta_i$ are independent centered random vectors in $\R^p$
satisfying $\norminf{\zeta_i}\le\eta$ almost surely for every $i$, and suppose
$\Var(S_j)=1$ for every $j=1,\ldots,p$. For every $A\in\cR_p$ and $a>0$,
\begin{equation}\label{eq:shell}
 \Pp(S\in A^a\setminus A^{-a})\le C\min\{1,(a+\eta)\sqrt L\}.
\end{equation}
The same bound holds after adding any independent random vector to $S$. If instead all coordinate
variances equal $v>0$, we may replace $(a+\eta)\sqrt L$ by $(a+\eta)\sqrt{L/v}$.
\end{lemma}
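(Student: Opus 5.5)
The plan is to compare two bounds on the probability that a small random resampling of the summands changes the value of $\one_A(S)$. The upper bound comes from Kane's noise-sensitivity estimate. The lower bound says that whenever $S$ lies in the shell, such a flip happens with conditional probability bounded below.

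Fix $\varepsilon\in(0,1]$, to be chosen of order $(a+\eta)^2$. Let $\zeta_1',\ldots,\zeta_n'$ be independent copies of the summands, and let $R\subseteq\{1,\ldots,n\}$ be a random set containing each index independently with probability $\varepsilon$. Set
\[
 S'=S+\sum_{i\in R}(\zeta_i'-\zeta_i).
\]
The function $\one_A(S)=\prod_j\one\{\ell_j<S_j\le u_j\}$ is an intersection of at most $2p$ halfspaces in the independent inputs $(\zeta_1,\ldots,\zeta_n)$, each input entering linearly. I would apply \cite[Corollary 1]{Kane2014} to get
\[
 \Pp(\one_A(S)\ne\one_A(S'))\le C\sqrt{\varepsilon\log(2p)}\le C\sqrt{\varepsilon L}.
\]
Kane states this on the hypercube. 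If his corollary is stated only there, I would transfer it by conditioning: one can realize each $\zeta_i$ together with its fresh copy through an auxiliary product structure. The point to preserve is that the bound depends only on $\varepsilon$ and the number of halfspaces, not on $n$ or on the laws of the summands.

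For the lower bound, write $T=\sum_{i\notin R}\zeta_i$, $U=\sum_{i\in R}\zeta_i$ and $U'=\sum_{i\in R}\zeta_i'$. Then $S=T+U$ and $S'=T+U'$, and conditionally on $(R,T)$ the vectors $U,U'$ are i.i.d. The boundary shell splits into an inner layer $A\setminus A^{-a}$ and an outer layer $A^a\setminus A$; the two are treated symmetrically, and I describe the inner one. On the event $S\in A$ with $S_j\in(u_j-a,u_j]$ for some $j$, it suffices that $S'_j>u_j$. Crucially, this needs only one coordinate to cross, and exiting through other coordinates also counts as a flip, so no union bound over $j$ arises. By exchangeability of $U$ and $U'$ given $(R,T)$, the flip probability is at least of the order of
\[
 \Pp\bigl(S\in A\setminus A^{-a},\;U'_j-U_j>a\bigr).
\]
I would lower-bound the conditional probability of $U'_j-U_j>a$ by Paley--Zygmund, using $\E[U_j^2]\approx\varepsilon$ (because $\Var(S_j)=1$) and a fourth-moment bound available from the boundedness $|\zeta_{ij}|\le\eta$. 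For the variance of $U_j$ to exceed $a^2$ and the fourth moment to be controlled, one needs $\varepsilon\ge K(a+\eta)^2$ for a large constant $K$. That choice gives
\[
 \Pp(\text{flip})\ge c\,\Pp(S\in A^a\setminus A^{-a})-(\text{error}),
\]
and combining with Kane's bound yields $C(a+\eta)\sqrt L$. If $(a+\eta)\sqrt L\gtrsim1$, the bound \eqref{eq:shell} is trivial.

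The main obstacle is this lower bound, because $U_j$ is not independent of the event $\{S\text{ near the face}\}$: conditioning on $S$ distorts the law of $U$. I would handle this by conditioning on $(R,T)$ rather than on $S$. The event ``$T+U$ is near face $j$ and $T+U'$ is beyond it'' can then be compared, via the i.i.d. pair $(U,U')$, to ``$T+U$ near face $j$'' times a constant. Concretely, among the two orderings of $U_j,U'_j$, the one with larger value pushes across with conditional probability at least $c$ whenever $|U_j-U'_j|>2a$. I also expect a second-moment issue: $\Var(U_j)$ is only $\varepsilon$ on average over $R$, so I would restrict to $R$ for which $\sum_{i\in R}\Var(\zeta_{ij})\ge\varepsilon/2$, an event of probability bounded below because each term is at most $\eta^2\le\varepsilon/K$.

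The remaining assertions follow directly. Adding an independent vector $Y$ just shifts the rectangle: apply \eqref{eq:shell} conditionally on $Y$ to $A-Y$, which is again in $\cR_p$, and average. If all coordinate variances equal $v$, apply the unit-variance case to $S/\sqrt v$, whose summands are bounded by $\eta/\sqrt v$, with $a$ replaced by $a/\sqrt v$.
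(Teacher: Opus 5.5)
Your Kane step (via the sign representation you allude to) and the two reductions at the end are fine. The gap is in the lower bound for the flip probability, which is the heart of the lemma.

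Conditioning on $(R,T)$ does not remove the distortion you identified; it makes it exact. Fix the upper face in coordinate $j$ and put $w=u_j-T_j$. Given $(R,T)$, the blocks $U$ and $U'$ are independent, so
\[
 \Pp\bigl(U_j\in(w-a,w],\ U'_j>w\mid R,T\bigr)
 =\Pp\bigl(U_j\in(w-a,w]\mid R,T\bigr)\,\Pp\bigl(U_j>w\mid R,T\bigr).
\]
The same factorization holds if the near-face event also contains the constraints $T+U\in A$. The comparison constant you want is therefore exactly $\Pp(U_j>w\mid R,T)$. This is arbitrarily small when $w\gg\sqrt\varepsilon$, and no exchangeability or ordering argument can improve an identity. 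On the near-face event $U_j\approx w$, so these are the configurations in which the removed block is atypically large, and the near-face event gives them positive weight.

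Paley--Zygmund does not help either. It bounds $\Pp(U'_j-U_j>a\mid R,T)$, not this probability conditioned on the near-face event, which depends on $U$. You would still have to show that $U_j=O(\sqrt\varepsilon)$ for the selected face with conditional probability close to one. Done face by face and summed over up to $2p$ faces, this forces a threshold of order $\sqrt{\varepsilon L}$, at which the crossing probability degrades to $p^{-c}$. This is why your ``(error)'' term is never identified.

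The missing idea, which is how the paper proceeds, is to condition on all original summands, $\mathcal F=\sigma(\zeta_1,\ldots,\zeta_n)$. Keep both the selection indicators $B_i\sim\Ber(\theta)$ and the fresh copies $\zeta'_i$ random. Then $D_j=S'_j-S_j$ satisfies $D_j+\theta S_j=\sum_iY_{ij}$, where $Y_{ij}=B_i(\zeta'_{ij}-\zeta_{ij})+\theta\zeta_{ij}$. These terms are conditionally independent, centered and bounded by $2\eta$, and
\[
 \Var(D_j\mid\mathcal F)=\theta+\theta(1-\theta)\sum_i\zeta_{ij}^2\ge\theta .
\]
The near-face event is now $\mathcal F$-measurable, so it no longer biases the fluctuation of the removed block, which only enlarges the variance. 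The sole effect of conditioning on $S$ is the drift $-\theta S_j$, harmless on $\{\norminf S\le1/(4\sqrt\theta)\}$. Conditional Berry--Esseen, with error $C\eta/\sqrt\theta\le CK^{-1/2}$, then gives outward crossing probability at least $c_0$ through the first near face. The union bound over coordinates enters only through Bernstein's bound $\Pp(\norminf S>1/(4\sqrt\theta))\le2\exp(L-c_1/\theta)$, which is negligible because $\theta L$ is small.

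A smaller point: the outer layer is not symmetric to the inner one. From $A^a\setminus A$, a flip requires $S'$ to re-enter $A$ in every violated coordinate at once. Instead, apply the inner-layer estimate to $A^a$, using $A^a\setminus A^{-a}\subseteq\partial_{2a}(A^a)$, the set of points of $A^a$ within distance $2a$ of one of its finite faces. Then only a single outward crossing is ever needed.
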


\begin{proof}
We first represent summand resampling as Boolean sign flips, and then show that resampling has a
uniformly positive probability of pushing a point near the rectangle boundary across a face.

Kane \cite[Corollary 1]{Kane2014} proves that, for the indicator $f$ of an intersection of $k$
affine halfspaces on a uniform Boolean cube, independent sign flips with probability
$\theta\in(0,1/2]$ give
\begin{equation}\label{eq:kane}
 \Pp\{f(\varepsilon)\ne f(\varepsilon^\theta)\}
 \le C\sqrt{\theta\log(ek)}.
\end{equation}

Let $S^\theta$ denote the sum obtained from $S$ by independently replacing each $\zeta_i$
by a fresh independent copy with probability $\theta$.
For each $i$, let $\zeta_i^+$ and $\zeta_i^-$ be independent copies of $\zeta_i$, and let
$\varepsilon_i$ be an independent uniform sign. Set
\[
 \widetilde S(\varepsilon)
 =\sum_i\left(
 \frac{1+\varepsilon_i}{2}\zeta_i^+
 +\frac{1-\varepsilon_i}{2}\zeta_i^-
 \right).
\]
Then $\widetilde S(\varepsilon)\stackrel{d}{=}S$. If $\varepsilon^\theta$ is obtained by
independently flipping each sign with probability $\theta$, a flip simply selects the other
independent copy, and hence
\[
 \bigl(\widetilde S(\varepsilon),\widetilde S(\varepsilon^\theta)\bigr)
 \stackrel{d}{=}(S,S^\theta).
\]

Conditional on the candidate pairs, membership in a rectangle is an intersection of at most $2p$
affine halfspace conditions in the signs. Since the Boolean cube is finite, any strict threshold can
be perturbed without changing the resulting Boolean function, so the choice of open or closed
halfspaces is immaterial. Applying
\eqref{eq:kane} conditionally and then averaging gives, for every rectangle $Q$,
\begin{equation}\label{eq:row-noise}
 \Pp\{\one_Q(S)\ne\one_Q(S^\theta)\}\le C\sqrt{\theta L}.
\end{equation}

For the remainder of the proof, use the equivalent resampling representation
\[
 S^\theta=\sum_i\bigl[(1-B_i)\zeta_i+B_i\zeta_i'\bigr],
\]
where the $B_i$ are independent $\Ber(\theta)$ variables and the $\zeta_i'$ are independent
copies of the $\zeta_i$, independent also of the $B_i$. Let
$\mathcal F=\sigma(\zeta_1,\ldots,\zeta_n)$. Set
$D=S^\theta-S$ and, for each coordinate $j$,
\[
 Y_{ij}=B_i(\zeta'_{ij}-\zeta_{ij})+\theta\zeta_{ij}.
\]
For every fixed $j$, the variables $Y_{1j},\ldots,Y_{nj}$ are
conditionally independent given $\mathcal F$ and satisfy
$\E(Y_{ij}\mid\mathcal F)=0$, and 
\[
 D_j+\theta S_j=\sum_iY_{ij}.
\]
Moreover, $|Y_{ij}|\le2\eta$ almost surely, and
\[
 \Var(Y_{ij}\mid\mathcal F)
 =\theta\E\zeta_{ij}^2+\theta(1-\theta)\zeta_{ij}^2.
\]
Hence
\begin{equation}\label{eq:conditional-var}
 v_j:=\Var(D_j\mid\mathcal F)
 =\sum_i\Var(Y_{ij}\mid\mathcal F)
 =\theta+\theta(1-\theta)\sum_i\zeta_{ij}^2
 \ge\theta,
\end{equation}
where we used $\sum_i\E\zeta_{ij}^2=1$.

Since each $Y_{ij}$ is centered and bounded by $2\eta$,
\[
 \sum_i\E(|Y_{ij}|^3\mid\mathcal F)
 \le2\eta\sum_i\E(Y_{ij}^2\mid\mathcal F)
 =2\eta v_j.
\]
The scalar Berry--Esseen theorem therefore gives, almost surely with respect to $\mathcal F$,
\begin{equation}\label{eq:conditional-be}
 \sup_z\left|
 \Pp\left(
 \frac{D_j+\theta S_j}{\sqrt{v_j}}\le z
 \,\middle|\,\mathcal F
 \right)-\Phi(z)
 \right|
 \le C\eta/\sqrt\theta.
\end{equation}
By continuity of $\Phi$, the same estimate holds with strict inequalities.

For a rectangle $Q$, let $\partial_d Q$ denote the set of points in $Q$ within distance $d$ of
at least one finite face. Set
\[
 q=d+\eta,\qquad \theta=Kq^2,
\]
where $K$ is a large absolute constant, to be fixed below. We may assume that $q\sqrt L\le c$, for a
sufficiently small absolute constant $c$, since otherwise the desired bound is trivial. With $c$
small enough, this also ensures $\theta\le1/2$.

Suppose that $S\in\partial_d Q$ and $\norminf S\le1/(4\sqrt\theta)$. Choose the first finite face
within distance $d$ in an arbitrary fixed ordering, and let $j$ be its coordinate. Conditional on $\mathcal F$,
crossing this face requires a displacement of at most $d$ in the outward direction. Suppose first that the selected face is the upper face in coordinate $j$, with endpoint $u_j$.
Since $S$ lies within distance $d$ of this face,
\[
 0\le u_j-S_j\le d.
\]
An outward crossing occurs if $D_j>u_j-S_j$. Because the Berry--Esseen estimate
\eqref{eq:conditional-be} is centered at $\E(D_j\mid\mathcal F)=-\theta S_j$, this condition is
\[
 \frac{D_j+\theta S_j}{\sqrt{v_j}}
 >
 \frac{u_j-S_j+\theta S_j}{\sqrt{v_j}}.
\]
On $\norminf S\le1/(4\sqrt\theta)$, the threshold on the right satisfies
\[
 \frac{u_j-S_j+\theta S_j}{\sqrt{v_j}}
 \le \frac{d+\theta|S_j|}{\sqrt{v_j}}
 \le \frac d{\sqrt\theta}+\sqrt\theta|S_j|
 \le K^{-1/2}+\frac14,
\]
where we used $v_j\ge\theta$, $d\le q$, and $\theta=Kq^2$.
For a lower face, the same argument applies after reversing signs. Thus
\eqref{eq:conditional-be} shows that the conditional probability of crossing outward is at least
\[
 1-\Phi(K^{-1/2}+1/4)-C\eta/\sqrt\theta.
\] 
Since $\eta/\sqrt\theta\le K^{-1/2}$, we may choose $K$ so that this probability is bounded below
by an absolute constant $c_0>0$.

Crossing the selected face forces $S^\theta\notin Q$. Hence, by
\eqref{eq:row-noise},
\[
 c_0\,\Pp\left(
 S\in\partial_d Q,\,
 \norminf S\le\frac1{4\sqrt\theta}
 \right)
 \le
 \Pp\{\one_Q(S)\ne\one_Q(S^\theta)\}
 \le C\sqrt{\theta L}
 =Cq\sqrt L.
\]
Therefore
\[
 \Pp(S\in\partial_d Q)
 \le Cq\sqrt L+
 \Pp\left\{\norminf S>\frac1{4\sqrt\theta}\right\}.
\]

It remains to bound the second term. Bernstein's inequality and a union bound give
\[
 \Pp\left\{\norminf S>\frac1{4\sqrt\theta}\right\}
 \le 2\exp(L-c_1/\theta),
\]
for an absolute constant $c_1>0$; here we use $\eta/\sqrt\theta\le K^{-1/2}$.
Since $\theta=Kq^2$ and $q\sqrt L\le c$, decreasing $c$ if necessary gives
$L\le c_1/(2\theta)$. Therefore
\[
 \Pp\left\{\norminf S>\frac1{4\sqrt\theta}\right\}
 \le 2e^{-c_1/(2\theta)}
 \le Cq\sqrt L,
\]
and consequently
\begin{equation}\label{eq:boundary-layer}
 \Pp(S\in\partial_d Q)\le C(d+\eta)\sqrt L.
\end{equation}

We now apply this with $Q=A^a$ and $d=2a$. If $A^{-a}$ is nonempty, every point of
$A^a\setminus A^{-a}$ lies within distance $2a$ of a finite face of $A^a$. If $A^{-a}$ is empty,
then for some coordinate $j$ the interval $(\ell_j+a,u_j-a]$ is empty, so
$u_j-\ell_j\le2a$. The corresponding coordinate interval of $A^a$ then has length at most $4a$,
and every point of $A^a$ lies within distance $2a$ of one of its two faces. Thus in all cases
\[
 A^a\setminus A^{-a}\subseteq\partial_{2a}(A^a).
\]
Equation \eqref{eq:boundary-layer} proves \eqref{eq:shell}.

Finally, adding an independent random vector only translates the rectangle after conditioning on
that vector, so the same estimate applies. If all coordinate variances equal $v>0$, apply the unit
variance result to $S/\sqrt v$; the boundary width and summand bound become $a/\sqrt v$ and
$\eta/\sqrt v$, respectively. This proves the remaining assertions.
\end{proof}

The following Gaussian boundary-layer estimate is a rectangle form of Nazarov's inequality
\cite{Nazarov2003}; see also \cite[Lemma A.1]{CCK2017}.
To keep the argument self-contained, we recover it here from \zcref{lem:concentration} by approximation, including singular covariance matrices.

\begin{corollary}\label{cor:gauss-shell}
If $G\sim N(0,\Sigma)$ and $\Sigma_{jj}=1$, including possibly singular $\Sigma$, then, for every
$A\in\cR_p$ and $a>0$,
\begin{equation}\label{eq:gauss-shell}
 \Pp(G\in A^a\setminus A^{-a})\le Ca\sqrt L.
\end{equation}
\end{corollary}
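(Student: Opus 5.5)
We realize $G$ as a limit of normalized sums of bounded independent vectors and apply \zcref{lem:concentration} with $\eta\to0$. Since $\Sigma\succeq0$ is symmetric, it has a square root $\Sigma^{1/2}$. Let $\varepsilon_{1},\ldots,\varepsilon_{N}$ be i.i.d.\ random vectors in $\R^p$ with independent Rademacher coordinates, and set
\[
 S_N=\frac1{\sqrt N}\sum_{k=1}^N\Sigma^{1/2}\varepsilon_k .
\]
The summands $\zeta_k=N^{-1/2}\Sigma^{1/2}\varepsilon_k$ are independent and centered. Their sup norm is at most $\eta_N:=N^{-1/2}\max_j\sum_m|(\Sigma^{1/2})_{jm}|$, which tends to $0$ as $N\to\infty$ for fixed $p$ and $\Sigma$. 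Moreover $\Cov(S_N)=\Sigma$, so $\Var(S_{N,j})=\Sigma_{jj}=1$. None of this needs invertibility of $\Sigma$, so singular covariances are covered at no extra cost. \zcref{lem:concentration} therefore gives, for every $A\in\cR_p$ and $a'>0$,
\[
 \Pp(S_N\in A^{a'}\setminus A^{-a'})\le C(a'+\eta_N)\sqrt L .
\]

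Next we pass to the limit. By the multivariate CLT, $S_N\to G$ in distribution. The set $A^a\setminus A^{-a}$ need not be a continuity set of $G$, since it is half-open and $G$ may charge its boundary when $\Sigma$ is singular. We handle this by approximating from inside with an open set. For any $a'>a$, the open set
\[
 O=\operatorname{int}(A^{a'})\setminus\overline{A^{-a'}}
\]
contains $A^a\setminus A^{-a}$. To check this, take $x\in A^a$. Then $\ell_j-a<x_j\le u_j+a$ for every $j$, so $x$ lies in the interior of $A^{a'}$. If moreover $x\notin A^{-a}$, then some coordinate satisfies $x_j\le\ell_j+a$ or $x_j>u_j-a$. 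This places $x_j$ outside the closed interval $[\ell_j+a',u_j-a']$ (or that interval is empty), so $x\notin\overline{A^{-a'}}$. Infinite endpoints are harmless in this check.

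The portmanteau theorem for open sets then gives
\[
 \Pp(G\in A^a\setminus A^{-a})\le\Pp(G\in O)\le\liminf_N\Pp(S_N\in O)\le\liminf_N\Pp(S_N\in A^{a'}\setminus A^{-a'})\le Ca'\sqrt L .
\]
Letting $a'\downarrow a$ yields \eqref{eq:gauss-shell}.

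The only delicate step is the limiting argument, namely checking the set inclusion and the open-set direction of portmanteau carefully in the singular case, together with the edge cases of empty $A^{-a'}$ and infinite endpoints. The rest follows directly from the lemma.
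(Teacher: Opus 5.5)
Your proof is correct and follows essentially the paper's route: write $G$ as the CLT limit of Rademacher sums built from a factor of $\Sigma$ (the paper uses $\Sigma=\sum_k v_kv_k^T$, and your columns of $\Sigma^{1/2}$ are one such choice), apply the concentration lemma with $\eta_N\to0$, and pass to the limit. Your open-set sandwich with $a'\downarrow a$ is valid but unnecessary, and your stated reason for it is mistaken. The boundary of $A^a\setminus A^{-a}$ lies in finitely many hyperplanes $\{x_j=c\}$, each of which is $G$-null because $G_j\sim N(0,1)$ even when $\Sigma$ is singular, so the set is already a $G$-continuity set; this is exactly how the paper concludes.
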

\begin{proof}
Choose $v_1,\ldots,v_p\in\R^p$ such that
\[
 \Sigma=\sum_{k=1}^p v_kv_k^T,
\]
and let $(\varepsilon_{kl})$ be independent uniform signs. Set
\[
 G_m=\frac1{\sqrt m}\sum_{k=1}^p\sum_{l=1}^m v_k\varepsilon_{kl}.
\]
Then $\Cov(G_m)=\Sigma$, while each summand has sup norm at most
\[
 \eta_m=\frac1{\sqrt m}\max_{1\le k\le p}\norminf{v_k}\longrightarrow0.
\]
Since $\Sigma_{jj}=1$, \zcref{lem:concentration} gives
\[
 \Pp(G_m\in A^a\setminus A^{-a})
 \le C(a+\eta_m)\sqrt L.
\]

By the finite-dimensional central limit theorem, $G_m$ converges in distribution to
$G\sim N(0,\Sigma)$. Every marginal of $G$ is standard normal, so the boundary of
$A^a\setminus A^{-a}$ has $G$-probability zero. Hence
\[
 \Pp(G_m\in A^a\setminus A^{-a})
 \longrightarrow
 \Pp(G\in A^a\setminus A^{-a}).
\]
Letting $m\to\infty$ proves \eqref{eq:gauss-shell}.
\end{proof}
\begin{remark}
For comparison, O'Donnell, Servedio and Tan
\cite[Theorem 7.1]{OST2022} prove an orthant-boundary anticoncentration bound for
linear images of independent uniform signs, under regularity assumptions on the
coefficients of the defining linear forms. In that setting, their result provides
an alternative route to a boundary estimate of this type. In contrast, \zcref{lem:concentration} allows arbitrary bounded summand laws,
without assuming a representation as a linear image of independent signs.

Bong, Kuchibhotla and Rinaldo \cite{BKR2026} obtain high-dimensional CLTs for
$m$-dependent data by coupling anticoncentration and Berry--Esseen estimates
through induction. In \zcref{lem:concentration}, the boundary-concentration
estimate instead follows directly from resampling, without an inductive Gaussian
approximation bound.
\end{remark}
\section{Local bounds for Gaussian derivatives}\label{sec:envelope}

For a positive definite covariance matrix $\Gamma$, define
\[
 g_{A,\Gamma}(x)=\Pp(x+G_\Gamma\in A),\qquad G_\Gamma\sim N(0,\Gamma).
\]
Whenever an auxiliary Gaussian vector is added to another random vector,
the two are taken independent unless stated otherwise.

\begin{lemma}\label{lem:envelope}
Let $\Gamma\succeq\sigma^2I_p$ with $\sigma>0$, and set
$h=\sigma/(3\sqrt L)$. For every positive integer $m$, every $A\in\cR_p$, and every
$x\in\R^p$,
\begin{equation}\label{eq:envelope}
 \sum_{j_1,\ldots,j_m=1}^p
 \sup_{\norminf z\le h}
 |\partial_{j_1\cdots j_m}g_{A,\Gamma}(x+z)|
 \le
 C_m h^{-m}
 \bigl[g_{A^{4h},\Gamma}(x)-g_{A^{-2h},\Gamma}(x)\bigr].
\end{equation}
\end{lemma}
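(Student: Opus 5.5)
The plan is to reduce \eqref{eq:envelope} to a one-dimensional computation by splitting off an isotropic Gaussian. Since $\Gamma\succeq\sigma^2I_p$, write $G_\Gamma=\sigma Y+G'$ with $Y\sim N(0,I_p)$ independent of $G'\sim N(0,\Gamma-\sigma^2I_p)$. Conditioning on $G'$ gives $g_{A,\Gamma}(x)=\E\, F(x+G')$, where
\[
 F(y)=\prod_{j=1}^p f_j(y_j),\qquad f_j(s)=\Phi\Bigl(\frac{u_j-s}{\sigma}\Bigr)-\Phi\Bigl(\frac{\ell_j-s}{\sigma}\Bigr).
\]
Both the derivatives and the bracket on the right of \eqref{eq:envelope} are averages over $G'$ of the corresponding quantities for $F$. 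It therefore suffices to prove the inequality with $g$ replaced by $F$ and $y=x+G'$ fixed. Moving the supremum inside the expectation only helps.

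For a product of functions of single coordinates, a tuple $(j_1,\dots,j_m)$ determines a set $S$ of distinct coordinates with multiplicities $r_k\ge1$, $\sum_{k\in S} r_k=m$. The corresponding derivative is
\[
 \prod_{k\in S}f_k^{(r_k)}\prod_{j\notin S}f_j.
\]
Each set $S$ with $|S|\le m$ arises from at most $C_m$ tuples. I will introduce the following one-dimensional quantities:
\begin{itemize}
 \item $P_k=\Pp\bigl(y_k+\sigma Y_k\in(\ell_k-4h,u_k+4h]\bigr)$, the probability of the enlarged interval;
 \item $P_k'$, the same probability for $(\ell_k+2h,u_k-2h]$;
 \item $q_k=P_k-P_k'$, the probability of the shell.
\end{itemize}
Three one-dimensional estimates are needed.
\begin{enumerate}
 \item For $|z|\le h$, $f_j(y_j+z)\le P_j$. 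This is trivial because the shift is at most $h<4h$.
 \item For $|z|\le h$ and $r\ge1$, $|f_k^{(r)}(y_k+z)|\le C_r h^{-r}q_k$. Here $f_k^{(r)}$ is $\sigma^{-r}$ times Hermite-weighted $\phi$ evaluated at the standardized endpoint distances $t$. The shell contains windows of width $2h$ at each endpoint, and $h/\sigma=1/(3\sqrt L)\le 1/3$, so $\phi$ is essentially constant across these windows. Hence $q_k\gtrsim (h/\sigma)\phi(t)$, up to the shift. The polynomial factor $(1+|t|)^{r-1}$ is absorbed by $(\sigma/h)^{r-1}=(3\sqrt L)^{r-1}$ when $|t|\lesssim\sqrt L$.
 \item When $|t|\gg\sqrt L$, I handle the polynomial factor differently: I use that the $\phi$-ratio across a shift of $O(h)$ is $e^{O(|t|h/\sigma)}$. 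For very large $|t|$ I would bound $\phi(t)|t|^{r-1}$ by $C_r\phi(t)^{1/2}$ and argue that this tail regime contributes at most $C_m p^{-2}$ relative to the scale of $h^{-m}q_k$. This is the estimate I am least sure of, and I would check it first.
\end{enumerate}

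Combining these estimates, the left side of \eqref{eq:envelope} for $F$ is bounded by
\[
 C_m h^{-m}\sum_{S:\,1\le|S|\le m}\ \prod_{k\in S}q_k\prod_{j\notin S}P_j
 \;\le\; C_m h^{-m}\prod_j P_j\sum_{r=1}^m\Bigl(\sum_k\frac{q_k}{P_k}\Bigr)^r .
\]
The right side of \eqref{eq:envelope} for $F$ is $h^{-m}\bigl(\prod_j P_j-\prod_j P_j'\bigr)$, and
\[
 \prod_j P_j-\prod_j P_j'=\prod_jP_j\Bigl[1-\prod_k\Bigl(1-\frac{q_k}{P_k}\Bigr)\Bigr]\ge c\prod_jP_j\min\Bigl\{1,\sum_k\frac{q_k}{P_k}\Bigr\}.
\]
When $s:=\sum_k q_k/P_k\le1$, the powers $s^r$ are all at most $s$, and the bound follows.

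The main obstacle is the regime $s>1$, where higher powers of $s$ are not controlled by $\min\{1,s\}$. In this regime I plan to bound the left side differently, using the trivial bound $\sum_S\prod_{k\in S}|f_k^{(r_k)}|\prod_{j\notin S}f_j$ with only $\prod_{j\notin S}P_j$. The aim is to show that the extra shell mass $\prod_{k\in S} q_k$ beyond one factor is paid for by the tail decay built into $h=\sigma/(3\sqrt L)$: coordinates whose endpoint distance exceeds $C\sqrt L$ have $q_k/P_k\le p^{-2}$ and contribute $O(1)$ in total, while each remaining coordinate has $q_k/P_k$ bounded away from $1$. If this counting fails, my fallback is to enlarge the constant in $h$; this would be harmless for the later application, which only uses $h\asymp\sigma/\sqrt L$.
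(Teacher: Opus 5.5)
There is a genuine gap in the regime $s>1$. Your reduction to covariance $\sigma^2I_p$ by conditioning on $G'$, the product expansion of the derivatives, and the one-dimensional bounds are sound, and so is the case $s\le1$. Item 3 needs no tail regime: for a standardized endpoint $v\ge0$, the inward window gives $\int_{v-h/\sigma}^{v-h/(2\sigma)}\phi\ge\frac{h}{2\sigma}\phi(v)e^{hv/(2\sigma)-h^2/(2\sigma^2)}$, and $(h/\sigma)^{r-1}(1+|v|^{r-1})\le C_re^{h|v|/(2\sigma)}$ absorbs the Hermite factor for every $v$. The additive $p^{-2}$ error you suggest would not be admissible anyway, since the right side can be arbitrarily small.

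The gap at $s>1$ is structural, not a missing estimate. Once every undifferentiated factor is bounded by $P_j$, the same $4h$-enlarged probability that appears on the right, the argument cannot close. Take $A=(-\infty,0]^p$, $y=0$, $\sigma=1$. Then $q_k/P_k\ge c/\sqrt L$, so $s\ge cp/\sqrt L$, and your bound is at least $C_mh^{-m}\prod_jP_j\,s^m$. The right side never exceeds $h^{-m}\prod_jP_j$. Your alternative for $s>1$ still uses $\prod_{j\notin S}P_j$, so it inherits this loss, and this example has no tail decay to exploit.

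The proposed counting is also wrong as stated:
\begin{itemize}
\item $q_k/P_k\le p^{-2}$ holds only when the interval, centered at $y_k$, contains $[-C\sqrt L,C\sqrt L]$; for $(a,\infty)$ with $a\gg\sqrt L$, $q_k/P_k$ is close to $1$.
\item Short intervals have $P_k'=0$, hence $q_k/P_k=1$, so the remaining ratios are not bounded away from $1$.
\item Even ratios bounded away from $1$ would not prevent $e_r(q/P)\gg1$.
\end{itemize}
Changing the constant in $h$ alters none of this.

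The missing idea is to keep a gap between two enlargements. Bound each undifferentiated factor by $B_j$, the probability of the $h$-enlargement of the $j$th interval (your item 1 with $h$ in place of $4h$). Write $U_j=P_j$, $V_j=P_j'$, $D_j=q_j$. The left side is then at most $C_mh^{-m}\prod_jB_j\sum_{r\le m}e_r(D_j/B_j)$. For an absolute $\kappa\in(0,1]$,
\[
\kappa^m\prod_jB_j\sum_{r\le m}e_r(D_j/B_j)\le\prod_j(B_j+\kappa D_j)-\prod_jB_j .
\]
If $\kappa D_j\le U_j-B_j$ for every $j$, this is at most $\prod_jU_j-\prod_jB_j\le\prod_jU_j-\prod_jV_j$. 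That absorbs all elementary symmetric terms at once, however large $s$ is.

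The inequality $D_j\le C(U_j-B_j)$ says the whole shell is controlled by the outer strip between the $h$- and $4h$-enlargements. It holds unless the standardized interval contains $[-T,T]$ with $T=4\sqrt L$. Otherwise the interval either lies on one side of the origin, where monotonicity of $\phi$ suffices, or has its nearer endpoint within $T$. In the latter case the inner-to-outer density ratio is $e^{O(TH)}=O(1)$ with $H=3h/\sigma=L^{-1/2}$.

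For the central coordinates $F$, one shows instead that $Q_F=\sum_{j\in F}D_j/B_j\le1/4$. One then concludes using $\prod_{j\in F}(B_j+\kappa D_j)\le B_Fe^{\kappa Q_F}$ and $U_F-V_F\ge cB_FQ_F$. This central/non-central splitting, not tail-decay counting, is what your proof needs.
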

We first derive the averaged estimate used in \zcref{sec:comparison},
and then prove \zcref{lem:envelope}.

\begin{corollary}\label{cor:averaged}
Let $b\in(0,1]$ and let $\Sigma\succeq b^2I_p$ satisfy $\Sigma_{jj}=1$ for
$j=1,\ldots,p$. Let $N\ge1$ and
\[
 S=\sum_{i=1}^N\zeta_i,
\]
where the $\zeta_i$ are independent centered random vectors in $\R^p$ satisfying
$\norminf{\zeta_i}\le\eta$ almost surely for every $i$. Suppose
$\Var(S_j)=v$ for $j=1,\ldots,p$, where $v\ge0$.
For $s>0$, set
\[
 h=\frac{b\sqrt s}{3\sqrt L}.
\]
If $\max\{v,s\}\ge1/2$ and $h\ge2\eta$, then, for every positive integer $m$ and every
$A\in\cR_p$,
\begin{equation}\label{eq:averaged}
 \E\sum_{j_1,\ldots,j_m=1}^p
 \sup_{\norminf z\le2\eta}
 |\partial_{j_1\cdots j_m}g_{A,s\Sigma}(S+z)|
 \le
 C_m b^{1-m}L^{m/2}s^{-(m-1)/2}.
\end{equation}
\end{corollary}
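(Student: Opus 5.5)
We apply \zcref{lem:envelope} pointwise with $\Gamma=s\Sigma$, and then average the resulting boundary-layer probability using \zcref{lem:concentration} or \zcref{cor:gauss-shell}.

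\textbf{Step 1: pointwise envelope.} Since $\Sigma\succeq b^2I_p$, we have $s\Sigma\succeq\sigma^2I_p$ with $\sigma=b\sqrt s$. The envelope radius of \zcref{lem:envelope} is then $\sigma/(3\sqrt L)=h$, exactly as in the statement. The hypothesis $h\ge2\eta$ gives $\{\norminf z\le2\eta\}\subseteq\{\norminf z\le h\}$. Hence, for every realization of $S$,
\[
 \sum_{j_1,\ldots,j_m}\sup_{\norminf z\le2\eta}|\partial_{j_1\cdots j_m}g_{A,s\Sigma}(S+z)|
 \le C_mh^{-m}\bigl[g_{A^{4h},s\Sigma}(S)-g_{A^{-2h},s\Sigma}(S)\bigr].
\]

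\textbf{Step 2: rewrite as a shell probability.} Taking expectations, with $G=G_{s\Sigma}$ independent of $S$, the bracket becomes
\[
 \Pp(S+G\in A^{4h}\setminus A^{-2h}).
\]
Let $A'=A^{h}$. Then $A^{4h}=(A')^{3h}$ and $A^{-2h}=(A')^{-3h}$. One checks that the operation $A\mapsto A^a$ composes additively for finite endpoints, including the empty-rectangle conventions, since we only need the inclusion $A^{4h}\setminus A^{-2h}\subseteq (A')^{3h}\setminus(A')^{-3h}$. The expectation is therefore bounded by the probability that $S+G$ lies in a shell of width $a=3h$ around a rectangle.

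\textbf{Step 3: case split on which component is nondegenerate.} If $v\ge1/2$, apply \zcref{lem:concentration} to $S$ with variance $v$, with $G$ as the added independent vector. This gives
\[
 C(3h+\eta)\sqrt{L/v}\le Ch\sqrt L,
\]
using $\eta\le h/2$ and $v\ge1/2$. If instead $s\ge1/2$, condition on $S$, which translates the rectangle, and apply \zcref{cor:gauss-shell} to $G/\sqrt s$, whose covariance $\Sigma$ has unit diagonal. The shell width becomes $3h/\sqrt s$, giving the bound $Ch\sqrt{L}/\sqrt s\le Ch\sqrt L$. In both cases the expectation is at most $Ch\sqrt L$.

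\textbf{Step 4: combine.} Multiplying by $C_mh^{-m}$ gives
\[
 C_mh^{1-m}\sqrt L=C_m\,3^{m-1}b^{1-m}s^{-(m-1)/2}L^{(m-1)/2}L^{1/2},
\]
which is the bound \eqref{eq:averaged}.

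The proof involves no real obstacle. The only points needing care are the bookkeeping of the expanded and contracted rectangles in Step 2, especially when $A^{-2h}$ is empty, and checking that the variance normalizations in the two cases of Step 3 produce the stated powers. The step with $v=0$ is also covered, since then $s\ge1/2$ must hold.
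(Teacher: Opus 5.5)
Your proof is correct and is essentially the paper's argument: you apply the envelope lemma pointwise with $\Gamma=s\Sigma\succeq(b\sqrt s)^2I_p$ and take expectations to obtain a shell probability for $S+G_{s\Sigma}$. You then split into the case $v\ge1/2$ (condition on the Gaussian and use the resampling concentration lemma with variance $v$) and the case $s\ge1/2$ (condition on $S$, rescale by $s^{-1/2}$, and use the Gaussian shell corollary), exactly as the paper does. The only cosmetic difference is how you make the shell $A^{4h}\setminus A^{-2h}$ symmetric: you recenter at $A^h$ with width $3h$, which is in fact an equality, whereas the paper simply enlarges it to $A^{4h}\setminus A^{-4h}$.
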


\begin{proof}
Since $2\eta\le h$, \zcref{lem:envelope} gives
\[
\E\sum_{j_1,\ldots,j_m=1}^p
\sup_{\norminf z\le2\eta}
|\partial_{j_1\cdots j_m}g_{A,s\Sigma}(S+z)|
\le
C_m h^{-m}
\Pp\bigl(S+G_{s\Sigma}\in A^{4h}\setminus A^{-2h}\bigr).
\]
Enlarging the boundary layer, it remains to bound
\[
\Pp\bigl(S+G_{s\Sigma}\in A^{4h}\setminus A^{-4h}\bigr).
\]

If $v\ge1/2$, condition on $G_{s\Sigma}$ and apply
\zcref{lem:concentration} to $S$. Since the summands are bounded by
$\eta$ and $h\ge2\eta$, the probability above is at most
\[
C(h+\eta)\sqrt{L/v}\le Ch\sqrt L.
\]

If $v<1/2$, then $s\ge1/2$. Conditioning instead on $S$, rescaling by $s^{-1/2}$, and applying
\zcref{cor:gauss-shell} to $G_{s\Sigma}$ gives
\[
 Ch\sqrt{L/s}\le Ch\sqrt L.
\]
Thus the left side of \eqref{eq:averaged} is at most
\[
 C_m h^{1-m}\sqrt L
 \le C_m b^{1-m}L^{m/2}s^{-(m-1)/2},
\]
which proves the claim.
\end{proof}

\begin{proof}[Proof of \zcref{lem:envelope}]
The proof has four steps.

\medskip
\emph{Step 1: One-dimensional derivative bounds.}
The left side is zero if the rectangle is empty, so we may assume that all coordinate intervals are
nonempty. First take $\Gamma=I_p$, so $h=1/(3\sqrt L)$. Absorbing the translation by $x$ into the
interval endpoints reduces the problem to the following one-dimensional estimate. For an interval
$I\subseteq\R$, let
\[
 q_I(y)=\Pp(G+y\in I),
\]
where $G$ is standard normal. We claim that, for $1\le k\le m$,
\begin{equation}\label{eq:scalar-strip}
 \sup_{|y|\le h}|q_I^{(k)}(y)|
 \le C_kh^{-k}\bigl[q_{I^{2h}}(0)-q_{I^{-2h}}(0)\bigr].
\end{equation}

The derivatives of $q_I$ are differences of derivatives of the Gaussian density evaluated at the
shifted endpoints of $I$. We use the bound 
\[
 |\phi^{(k-1)}(v)|
 \le C_k(1+|v|^{k-1})\phi(v).
\]
Suppose first that $v\ge0$. For $r\in[h/2,h]$,
\[
 \frac{\phi(v-r)}{\phi(v)}
 =\exp(vr-r^2/2),
\]
and hence
\[
 \int_{v-h}^{v-h/2}\phi(u)\,\mathrm du
 \ge \frac h2\,\phi(v)\exp(hv/2-h^2/2).
\]
The case $v<0$ follows by symmetry. Since $h\le1/3$,
\[
 h^{k-1}(1+|v|^{k-1})\le C_k e^{h|v|/2},
\]
so
\[
 h^k|\phi^{(k-1)}(v)|
 \le C_k\int_{v-h}^{v+h}\phi(u)\,\mathrm du.
\]

If an endpoint is shifted by at most $h$, the resulting interval of radius $h$ lies within the
radius-$2h$ neighborhood of the original endpoint. Summing the contributions from the two endpoints
therefore gives \eqref{eq:scalar-strip}. The same conclusion holds when $I^{-2h}$ is empty.

\medskip
\emph{Step 2: Summing the coordinate contributions.}
For the coordinate intervals $I_j$, put
\[
 B_j=q_{I_j^h}(0),\qquad
 D_j=q_{I_j^{4h}}(0)-q_{I_j^{-2h}}(0).
\]
Since $|z_j|\le h$, an undifferentiated coordinate factor is at most $B_j$. If coordinate $j$
is differentiated $k\ge1$ times, Step 1 bounds the corresponding factor by
$C_kh^{-k}D_j$. In particular, all $B_j$ are strictly positive.

Consider an ordered derivative $\partial_{j_1\cdots j_m}$, and let
$R\subseteq\{1,\ldots,p\}$ be the set of distinct coordinates among
$j_1,\ldots,j_m$. If $|R|=r$, the product of the coordinatewise bounds is at most
\[
 C_mh^{-m}
 \left(\prod_{j=1}^p B_j\right)
 \prod_{j\in R}\frac{D_j}{B_j}.
\]
For a fixed set $R$, the number of possible multiplicities and orderings
of the $m$ derivatives is bounded by a constant depending only on $m$.
Summing first over these possibilities and then over all sets $R$
bounds the left-hand side of \eqref{eq:envelope} by 
\begin{equation}\label{eq:elementary-sum}
 C_mh^{-m}\left(\prod_{j=1}^pB_j\right)
 \sum_{r=1}^m e_r(D_1/B_1,\ldots,D_p/B_p),
\end{equation}
where $e_r$ denotes the $r$th elementary symmetric polynomial, with $e_r=0$ for $r>p$.

The supremum in \eqref{eq:envelope} is taken separately for each derivative entry, so the
maximizing shift need not be the same from one entry to another.

\medskip
\emph{Step 3: Comparing the products with the boundary-layer probability.}
Set $J_j=I_j^h$, $H=3h=1/\sqrt L$, and
\[
 U_j=q_{J_j^H}(0),\qquad V_j=q_{J_j^{-H}}(0).
\]
Then $B_j=q_{J_j}(0)$ and $D_j=U_j-V_j$.
The remaining estimate is
\begin{equation}\label{eq:product-shell}
 \left(\prod_jB_j\right)\sum_{r=1}^m e_r(D_j/B_j)
 \le C_m\left(\prod_jU_j-\prod_jV_j\right).
\end{equation}
For an interval containing a large central segment, the inward endpoint strips can have much more
mass than the outward strips, so expansion alone does not give a uniform bound for the removed mass.
We treat such intervals separately, using the small total mass of their endpoint strips.
Let $T=4\sqrt L$, and let $F$ consist of the coordinates for which $J_j$ contains $[-T,T]$; write $N$
for its complement.
The choices $H=L^{-1/2}$ and $T=4\sqrt L$ serve two purposes: $TH=4$ bounds density ratios at endpoints
within distance $T$ of the origin, while the tail strips at distance at least $T-H$ have small total
mass over all $p$ coordinates.

For $j\in N$, expansion controls the entire strip probability:
\begin{equation}\label{eq:near}
 D_j\le C(U_j-B_j).
\end{equation}
To see this for an interval on one side of zero, reflect if necessary and write its nearest endpoint
as $a\ge0$. Monotonicity gives
\[
 B_j-V_j\le2\int_a^{a+H}\phi(u)\dd
 \le2\int_{a-H}^a\phi(u)\dd\le2(U_j-B_j).
\]
When $J_j^{-H}$ is nonempty this bounds the two removed strips; when it is empty the interval has
length at most $2H$. The middle inequality holds pointwise after pairing $a+v$ with $a-v$. If the interval contains zero, let $u$ be the distance from zero to its nearer finite endpoint. Since $j\in N$, we have $u\le T$. The removed mass is at most
$2H\phi((u-H)_+)$, and the added strip at that endpoint has mass at least $H\phi(u+H)$. Their ratio is at most $2e^{2uH+H^2/2}\le C$, since $uH\le TH=4$ and $H\le1$. This also handles the case $J_j^{-H}=\varnothing$, proving \eqref{eq:near}.

For $j\in F$, the strip probabilities are instead small in aggregate:
\[
 B_j\ge1-2\bar\Phi(T)\ge\tfrac12,
 \quad D_j\le4H\phi(T-H),
 \quad Q_F=\sum_{j\in F}D_j/B_j\le\tfrac14.
\]
For the last bound, use $T-H\ge3\sqrt L$ and $p=e^{L-1}$ to obtain
$Q_F\le8pL^{-1/2}\phi(3\sqrt L)<1/4$ for $L\ge1$.

For a coordinate set $E$, write $B_E=\prod_{j\in E}B_j$, and similarly for $U_E,V_E$; empty products
equal one.
Choose an absolute $\kappa\in(0,1]$ such that $\kappa D_j\le U_j-B_j$ for $j\in N$, using
\eqref{eq:near}.
The generating product has nonnegative coefficients, so
\begin{align*}
 \kappa^m\left(\prod_jB_j\right)\sum_{r=1}^m e_r(D_j/B_j)
 &\le\prod_j(B_j+\kappa D_j)-\prod_jB_j\\
 &\le B_F\bigl(e^{\kappa Q_F}U_N-B_N\bigr).
\end{align*}
Write
\[
 e^{\kappa Q_F}U_N-B_N
 =e^{\kappa Q_F}(U_N-B_N)+(e^{\kappa Q_F}-1)B_N.
\]
Since $Q_F\le1/4$, we have $e^{\kappa Q_F}\le C$ and
$e^{\kappa Q_F}-1\le C Q_F$. Absorbing $\kappa^{-m}$ gives
\begin{equation}\label{eq:split-products}
 \left(\prod_jB_j\right)\sum_{r=1}^m e_r(D_j/B_j)
 \le C_m[B_F(U_N-B_N)+B_NB_FQ_F].
\end{equation}
For $j\in F$, we have $U_j/B_j\le1+Q_F$, and hence
\begin{align*}
 U_F-V_F
 &=U_F\left[1-\prod_{j\in F}\left(1-\frac{D_j}{U_j}\right)\right]\\
 &\ge B_F\left[1-\exp\left(-\frac{Q_F}{1+Q_F}\right)\right]
 \ge cB_FQ_F.
\end{align*}
The right side of \eqref{eq:split-products} is therefore at most
\[
 C_m[U_F(U_N-B_N)+B_N(U_F-V_F)]
 \le C_m(U_FU_N-V_FV_N).
\]
This proves \eqref{eq:product-shell}. Together with \eqref{eq:elementary-sum}, it gives
\eqref{eq:envelope} for covariance $I_p$, and scaling gives the result for $\sigma^2I_p$.

\medskip
\emph{Step 4: General covariance matrices.}
Let $\Gamma\succeq\sigma^2I_p$. Write
\[
 G_\Gamma=G_{\sigma^2I_p}+H_0,
 \qquad
 H_0\sim N(0,\Gamma-\sigma^2I_p),
\]
with the two Gaussian vectors independent. The covariance of $H_0$ may be singular. Conditioning on
$H_0$ gives
\[
 g_{A,\Gamma}(x)
 =\E\, g_{A,\sigma^2I_p}(x+H_0).
\]
Hence, for every derivative index $(j_1,\ldots,j_m)$,
\[
 \sup_{\norminf z\le h}
 |\partial_{j_1\cdots j_m}g_{A,\Gamma}(x+z)|
 \le
 \E\sup_{\norminf z\le h}
 |\partial_{j_1\cdots j_m}g_{A,\sigma^2I_p}(x+H_0+z)|.
\]
Summing over the derivative indices and applying the bound already proved for covariance
$\sigma^2I_p$, with $y=x+H_0$, yields
\[
 \sum_{j_1,\ldots,j_m=1}^p
 \sup_{\norminf z\le h}
 |\partial_{j_1\cdots j_m}g_{A,\Gamma}(x+z)|
 \le
 C_mh^{-m}\E\!\left[
 g_{A^{4h},\sigma^2I_p}(x+H_0)
 -g_{A^{-2h},\sigma^2I_p}(x+H_0)
 \right].
\]
The right-hand side equals
\[
 C_mh^{-m}
 \bigl[g_{A^{4h},\Gamma}(x)-g_{A^{-2h},\Gamma}(x)\bigr],
\]
which proves \eqref{eq:envelope}. Differentiation under the expectation is justified by the bounded
derivatives of the Gaussian convolution.
\end{proof}

\section{Gaussian comparison}\label{sec:comparison}

Under the assumptions of \zcref{thm:main}, set $\xi_i=X_i/\sqrt n$ and $\delta=B/\sqrt n$, and write
\begin{equation}\label{eq:cutoff}
 r=\delta L^{3/2},\qquad t=144\delta^2L/b^2.
\end{equation}
Then $b\sqrt t/(3\sqrt L)=4\delta$, the largest shift needed below.
For $\lambda\in[0,1]$, set
\[
 a_\lambda=1+\lambda,\qquad
 q_\lambda=\frac{1-\lambda}{(1+\lambda)^2},\qquad
 B_{i,\lambda}\sim\Ber(q_\lambda),\qquad
 Y_{i,\lambda}=a_\lambda B_{i,\lambda}\xi_i,\qquad
 V_\lambda=\sum_iY_{i,\lambda}.
\]
The Bernoulli variables are mutually independent and independent of all $\xi_i$.
Only their laws are needed; no coupling across different $\lambda$ is imposed.
With $M_i=\E\xi_i\xi_i^T$ and $T_i=\E\xi_i^{\otimes3}$,
\begin{equation}\label{eq:thinning-moments}
 \E Y_{i,\lambda}=0,\quad
 \E Y_{i,\lambda}^{\otimes2}=(1-\lambda)M_i,\quad
 \E Y_{i,\lambda}^{\otimes3}=(1-\lambda^2)T_i,\quad
 \norminf{Y_{i,\lambda}}\le2\delta.
\end{equation}
For $A\in\cR_p$, define
\begin{equation}\label{eq:path}
 P_A(\lambda)=\Pp\{V_\lambda+G_{s\Sigma}\in A\},\qquad s=\lambda+t.
\end{equation}
The total covariance is $(1+t)\Sigma$, and
\[
 P_A(0)=\Pp(W+G_{t\Sigma}\in A),\qquad
 P_A(1)=\Pp(G_{(1+t)\Sigma}\in A).
\]

\begin{lemma}\label{lem:path}
Uniformly over $A\in\cR_p$ and $0\le\lambda\le1$,
\begin{equation}\label{eq:path-bound}
 |P_A'(\lambda)|\le C\left[
 b^{-2}\delta L^{3/2}\frac{\lambda}{\lambda+t}
 +b^{-3}\delta^2L^2(\lambda+t)^{-3/2}\right].
\end{equation}
Endpoint derivatives are one-sided.
\end{lemma}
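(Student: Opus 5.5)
Write $P_A(\lambda)=\E\,g_{A,s\Sigma}(V_\lambda)$ with $s=\lambda+t$, and differentiate the two dependences on $\lambda$ separately. The Gaussian part is handled by the heat equation: $\partial_s g_{A,s\Sigma}=\tfrac12\sum_{j,k}\Sigma_{jk}\partial_{jk}g_{A,s\Sigma}$. The summand part needs a representation of $\E g(V_\lambda)$ that is differentiable in $\lambda$. Since only laws matter, write $\E g(V_\lambda)$ through the independent laws of the $Y_{i,\lambda}$ and differentiate each law in turn. For fixed $i$, put $V^{(i)}=V_\lambda-Y_{i,\lambda}$ and $f=g_{A,s\Sigma}$. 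Then
\[
\frac{d}{d\lambda}\E f(V^{(i)}+Y_{i,\lambda})
=\frac{d}{d\lambda}\Bigl[q_\lambda\E f(V^{(i)}+a_\lambda\xi_i)+(1-q_\lambda)\E f(V^{(i)})\Bigr],
\]
where $V^{(i)}$ is held fixed in law. Summing over $i$ (product rule over independent factors) gives $\frac{d}{d\lambda}\E f(V_\lambda)$ at fixed $s$.

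Next, Taylor expand $f(V^{(i)}+a_\lambda\xi_i)$ around $V^{(i)}$ to fourth order, with integral remainder. The expansion is
\[
q_\lambda\E f(V^{(i)}+a_\lambda\xi_i)-q_\lambda\E f(V^{(i)})
=\sum_{k=2}^3\frac{q_\lambda a_\lambda^k}{k!}\E\langle\nabla^kf(V^{(i)}),\xi_i^{\otimes k}\rangle+R_{i,4},
\]
and the first-order term vanishes by centering and independence. The $\lambda$-derivative of the bracketed expression therefore equals
\[
\tfrac12\tfrac{d}{d\lambda}(q_\lambda a_\lambda^2)\langle\nabla^2,M_i\rangle+\tfrac16\tfrac d{d\lambda}(q_\lambda a_\lambda^3)\langle\nabla^3,T_i\rangle+\partial_\lambda R_{i,4}.
\]
By \eqref{eq:thinning-moments}, $q_\lambda a_\lambda^2=1-\lambda$ and $q_\lambda a_\lambda^3=1-\lambda^2$, so these derivatives are $-1$ and $-2\lambda$. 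Summing over $i$, the second-order term is $-\tfrac12\sum_i\E\langle\nabla^2 f(V^{(i)}),M_i\rangle$. Its counterpart from the heat equation is $+\tfrac12\langle\E\nabla^2 f(V_\lambda),\Sigma\rangle$ with $\Sigma=\sum_iM_i$.

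These do not cancel exactly, because $V^{(i)}\ne V_\lambda$. Re-expand $\E\nabla^2f(V_\lambda)=\E\nabla^2 f(V^{(i)}+Y_{i,\lambda})$ around $V^{(i)}$. The first-order correction vanishes, and the remainder is a fourth derivative paired with $M_i\otimes\E Y_{i,\lambda}^{\otimes2}$, of size $\delta^2$ per coordinate pair. The cubic term is $-\tfrac{\lambda}{3}\sum_i\E\langle\nabla^3 f(V^{(i)}),T_i\rangle$. To bound it, use $|T_{i,jkl}|\le\delta\,(M_i)_{jj}$, or more crudely $\sum_i|T_{i,jkl}|\le\delta\sum_i\E|\xi_{ij}\xi_{ik}|$. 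The aim is $\delta\cdot\E\sum_{jkl}\sup_{|z|\le2\delta}|\partial_{jkl}f(V_\lambda+z)|$, with the shift absorbing the difference between $V^{(i)}$ and $V_\lambda$.

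Corollary~\ref{cor:averaged} with $m=3$ bounds this by $C\delta\, b^{-2}L^{3/2}s^{-1}$, giving the term $b^{-2}\delta L^{3/2}\lambda/(\lambda+t)$. The corollary applies with $S=V^{(i)}$ or $V_\lambda$, since these have per-coordinate variance $v=1-\lambda$ (or slightly less), summands bounded by $\eta=2\delta$, and $\max\{v,s\}\ge1/2$. We need $h=b\sqrt s/(3\sqrt L)\ge b\sqrt t/(3\sqrt L)=4\delta$, which \eqref{eq:cutoff} ensures. The fourth-order remainders, both $\partial_\lambda R_{i,4}$ and the mismatch term, contain $\nabla^4 f$ at shifted points within $2\delta$. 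They are weighted by $\sum_i|\xi_{ij}\xi_{ik}\xi_{il}\xi_{im}|$-type moments, which are at most $\delta^2$ per diagonal normalization. Corollary~\ref{cor:averaged} with $m=4$ gives $C\delta^2b^{-3}L^2s^{-3/2}$.

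The main obstacle is converting the tensor contractions $\sum_{jklm}|\partial_{jklm}f|\,|\E\xi_{ij}\xi_{ik}\xi_{il}\xi_{im}|$, summed over $i$, into the form $\delta^2\sum_{jklm}\sup|\partial_{jklm}f|$ required by Corollary~\ref{cor:averaged}. The plan is to bound $|\xi_{ij}\xi_{ik}\xi_{il}\xi_{im}|\le\delta^2\,\xi_{ij}^2$ pointwise, then use $\sum_i\E\xi_{ij}^2=1$. This leaves a sum over $i$ weighted by the law of $\xi_i$, so the expectations over $V^{(i)}$ must be handled carefully. We take the supremum over shifts inside the expectation, so that $V^{(i)}+\theta a_\lambda\xi_i$ is replaced by $V_\lambda$ plus a $\le4\delta$ shift. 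This requires re-coupling $V^{(i)}$ with an independent copy of $Y_{i,\lambda}$ and accepting the doubled shift radius $4\delta$, which is why $t$ is set so that $h=4\delta$. The $\lambda$-derivative of $R_{i,4}$ also produces terms from differentiating $a_\lambda$ inside the argument. These are handled the same way, since $|a'_\lambda|\le1$ and $|q'_\lambda|\le C$. One-sided derivatives at the endpoints follow because all expressions are continuous on $[0,1]$.
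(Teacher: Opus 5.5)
Your argument follows the paper's proof. It uses the heat equation for the Gaussian component and the product rule over the independent summand laws, with coefficients $-\tfrac12$ and $-\tfrac\lambda3$. The quadratic terms cancel up to $\tfrac12\sum_iM_i:\E[D^2g(V^{(i)}+Y_{i,\lambda})-D^2g(V^{(i)})]$. You then recouple with an independent copy of $Y_{i,\lambda}$, so every derivative is evaluated within $4\delta$ of a vector distributed as $V_\lambda$, and finish with Corollary~\ref{cor:averaged} for $\eta=2\delta$ and $m=3,4$. Three details need repair, though none changes the structure.

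\emph{The moment bounds.} The pointwise bound $|\xi_{ij}\xi_{ik}\xi_{il}\xi_{im}|\le\delta^2\xi_{ij}^2$ is false when $|\xi_{ij}|$ is much smaller than the other three factors. Likewise $|(T_i)_{jkl}|\le\delta(M_i)_{jj}$ can fail when the indices differ. Use instead $|\xi_{ij}\xi_{ik}\xi_{il}\xi_{im}|\le\delta^2|\xi_{ij}\xi_{ik}|$ together with Cauchy--Schwarz, $\sum_i\E|\xi_{ij}\xi_{ik}|\le\bigl(\sum_i\E\xi_{ij}^2\sum_i\E\xi_{ik}^2\bigr)^{1/2}=1$, as the paper does. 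Your ``crude'' version of the cubic bound is the correct one.

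\emph{The fifth-derivative term.} Because you expand first and then differentiate in $\lambda$, $\partial_\lambda R_{i,4}$ contains a fifth-derivative term,
$q_\lambda a_\lambda^4\,\E\int_0^1\tfrac{(1-u)^3}{6}\,u\,D^5g(V^{(i)}+ua_\lambda\xi_i)[\xi_i^{\otimes5}]\,\mathrm du$.
So it is not literally handled ``the same way'' as the fourth-order terms. It is harmless: Corollary~\ref{cor:averaged} with $m=5$ and coefficient weight $\delta^3$ bound it by $C\delta^3b^{-4}L^{5/2}s^{-2}$. This is at most $C\delta^2b^{-3}L^2s^{-3/2}$, since $\delta\sqrt L/(b\sqrt s)\le\delta\sqrt L/(b\sqrt t)=1/12$ by \eqref{eq:cutoff}. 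The paper avoids the term by differentiating first, $\tfrac{\mathrm d}{\mathrm d\lambda}\E f(x+Y_i)=q_\lambda'\E[f(x+a_\lambda\xi_i)-f(x)]+q_\lambda\E Df(x+a_\lambda\xi_i)[\xi_i]$, and only then expanding. That leaves purely fourth-order remainders with a combined kernel satisfying $|K_\lambda|\le1$.

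\emph{Where the corollary applies.} Corollary~\ref{cor:averaged} requires all coordinate variances of $S$ to be equal. It therefore applies to $V_\lambda$, with variances $1-\lambda$, but not directly to $V^{(i)}$, whose variances $(1-\lambda)(1-(M_i)_{jj})$ depend on $j$. Drop the ``$S=V^{(i)}$'' option; the recoupling you describe is exactly what makes the corollary applicable.
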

\begin{proof}
Fix $\lambda\in(0,1)$ and write $Y_i=Y_{i,\lambda}$. Set
\[
 V_{-i}=\sum_{k\ne i}Y_k,\qquad g=g_{A,s\Sigma},\qquad s=\lambda+t.
\]
Since $s\ge t>0$, the Gaussian convolution has bounded derivatives of every fixed order.
Together with the boundedness of the summands and the smoothness of $a_\lambda$ and
$q_\lambda$, this justifies the differentiations below. The endpoint derivatives follow by the
same calculation with one-sided derivatives.

We first differentiate the law of a single summand, holding the test function fixed. For a smooth
function $f$,
\[
 \E f(x+Y_i)
 =(1-q_\lambda)f(x)+q_\lambda\E f(x+a_\lambda\xi_i).
\]
Since $a_\lambda'=1$,
\[
 \frac{\mathrm d}{\mathrm d\lambda}\E f(x+Y_i)
 =
 q_\lambda'\E\bigl[f(x+a_\lambda\xi_i)-f(x)\bigr]
 +q_\lambda\E Df(x+a_\lambda\xi_i)[\xi_i].
\]
Expand the first term through order three and the second through order two, with integral
remainders. Centering removes the first-order contribution. Moreover,
\[
 q_\lambda a_\lambda^2=1-\lambda,
 \qquad
 q_\lambda a_\lambda^3=1-\lambda^2,
\]
so the coefficients of the second- and third-order terms are
\[
 \frac12\frac{\mathrm d}{\mathrm d\lambda}(q_\lambda a_\lambda^2)
 =-\frac12,
 \qquad
 \frac16\frac{\mathrm d}{\mathrm d\lambda}(q_\lambda a_\lambda^3)
 =-\frac{\lambda}{3}.
\]
Writing
\[
 M_i=\E\xi_i\xi_i^T,\qquad T_i=\E\xi_i^{\otimes3},
\]
we obtain
\begin{align}\label{eq:identity}
 \frac{\mathrm d}{\mathrm d\lambda}\E f(x+Y_i)
 ={}&-\tfrac12 M_i:D^2f(x)-\tfrac\lambda3 T_i:D^3f(x)\notag\\
 &+\E\int_0^1 K_\lambda(u)
 D^4f(x+ua_\lambda\xi_i)[\xi_i,\xi_i,\xi_i,\xi_i]\dd,
\end{align}
where 
\[
 M:D^2f(x)=\sum_{j,k=1}^p M_{jk}\partial_{jk}f(x),
 \qquad
 T:D^3f(x)=\sum_{j,k,\ell=1}^p T_{jk\ell}\partial_{jk\ell}f(x),
\] and
$D^4f(y)[z,z,z,z]$ denotes the fourth derivative of $f$ at $y$ evaluated four times in
the direction $z$. The two integral remainders combine to give
\begin{align*}
 K_\lambda(u)
 &=\frac{q_\lambda'a_\lambda^4}{6}(1-u)^3
   +\frac{q_\lambda a_\lambda^3}{2}(1-u)^2\\
 &=\frac{1+\lambda}{6}(1-u)^2\bigl[(3-\lambda)u-2\lambda\bigr].
\end{align*}
In particular,
\[
 |K_\lambda(u)|\le1,\qquad 0\le\lambda,u\le1.
\]

We now combine the variation of the summand laws with the variation of the Gaussian component.
Applying the product rule to the independent summand laws, apply \eqref{eq:identity} with $f=g$, average over $V_{-i}$, and sum over $i$. The remaining
$\lambda$-dependence of $g$ comes from $s=\lambda+t$, and the Gaussian heat equation gives
\[
 \partial_\lambda g=\frac12\Sigma:D^2g.
\]
Since $\sum_iM_i=\Sigma$, the quadratic terms combine to
\[
 \frac12\sum_iM_i:
 \E\bigl[D^2g(V_{-i}+Y_i)-D^2g(V_{-i})\bigr].
\]
Expanding each Hessian to first order around $V_{-i}$, the linear term vanishes because
$Y_i$ is centered and independent of $V_{-i}$. Hence
\begin{align}\label{eq:derivative-expansion}
 P_A'(\lambda)={}&-\frac\lambda3\sum_i T_i:\E D^3g(V_{-i})\notag\\
 &+\frac12\sum_i\sum_{j,k,\ell,q}(M_i)_{jk}
 \E\left[
 Y_{i,\ell}Y_{i,q}\int_0^1(1-u)
 \partial_{jk\ell q}g(V_{-i}+uY_i)\dd
 \right]\notag\\
 &+\sum_i\sum_{j,k,\ell,q}
 \E\left[
 \xi_{i,j}\xi_{i,k}\xi_{i,\ell}\xi_{i,q}
 \int_0^1K_\lambda(u)
 \partial_{jk\ell q}g(V_{-i}+ua_\lambda\xi_i)\dd
 \right].
\end{align}

We next bound the coefficients of these derivatives. For every pair of coordinates $j,k$,
Cauchy--Schwarz and the unit diagonal of $\Sigma$ give
\[
 \sum_i\E|\xi_{i,j}\xi_{i,k}|
 \le
 \left(
 \sum_i\E\xi_{i,j}^2
 \sum_i\E\xi_{i,k}^2
 \right)^{1/2}
 =1.
\]
Since $\norminf{\xi_i}\le\delta$, it follows that, for every fixed tuple of coordinate indices,
\begin{align}\label{eq:moment-weights}
 \sum_i|(T_i)_{jk\ell}|
 &\le\delta,\notag\\
 \sum_i\E|\xi_{i,j}\xi_{i,k}\xi_{i,\ell}\xi_{i,q}|
 &\le\delta^2.
\end{align}
Also $\sum_i|(M_i)_{jk}|\le1$, while
\[
 \E|Y_{i,\ell}Y_{i,q}|
 =q_\lambda a_\lambda^2\E|\xi_{i,\ell}\xi_{i,q}|
 =(1-\lambda)\E|\xi_{i,\ell}\xi_{i,q}|
 \le\delta^2.
\]
Thus
\begin{equation}\label{eq:covariance-weights}
 \sum_i|(M_i)_{jk}|\E|Y_{i,\ell}Y_{i,q}|\le\delta^2.
\end{equation}

The derivatives in \eqref{eq:derivative-expansion} are evaluated at different omitted-summand
sums $V_{-i}$. To put them under a common distribution, for each $i$ introduce an independent
copy $Y_i^0$ of $Y_i$, independent of all variables appearing in the $i$th remainder, and set
\[
 \widehat V_i=V_{-i}+Y_i^0.
\]
Then
\[
 \widehat V_i\stackrel{d}{=}V_\lambda.
\]
Moreover,
\[
 \norminf{Y_i^0}\le2\delta,\qquad
 \norminf{Y_i}\le2\delta,\qquad
 \norminf{a_\lambda\xi_i}\le2\delta.
\]
Consequently, every derivative argument in \eqref{eq:derivative-expansion} lies within
$4\delta$ in sup norm of $\widehat V_i$. For $m=3,4$, define
\[
 H_{j_1\cdots j_m}(x)
 =
 \sup_{\norminf z\le4\delta}
 |\partial_{j_1\cdots j_m}g(x+z)|.
\]
The variables supplying the coefficients in each term of
\eqref{eq:derivative-expansion} are independent of $\widehat V_i$. For example,
\[
 \E\!\left[
 |\xi_{i,j}\xi_{i,k}\xi_{i,\ell}\xi_{i,q}|
 H_{jk\ell q}(\widehat V_i)
 \right]
 =
 \E|\xi_{i,j}\xi_{i,k}\xi_{i,\ell}\xi_{i,q}|\,
 \E H_{jk\ell q}(V_\lambda).
\]
The covariance term factors in the same way, while the coefficients $T_i$ in the cubic term
are deterministic. Using \eqref{eq:moment-weights} and \eqref{eq:covariance-weights} therefore
gives
\begin{equation}\label{eq:aggregate}
 |P_A'(\lambda)|
 \le
 C\lambda\delta\,
 \E\sum_{j,k,\ell}H_{jk\ell}(V_\lambda)
 +
 C\delta^2\,
 \E\sum_{j,k,\ell,q}H_{jk\ell q}(V_\lambda).
\end{equation}

Finally, the coordinate variances of $V_\lambda$ are $1-\lambda$, while
$s=\lambda+t$, so
\[
 \max\{1-\lambda,s\}\ge\frac12.
\]
The summands of $V_\lambda$ are bounded by $2\delta$, and \eqref{eq:cutoff} gives
\[
 \frac{b\sqrt s}{3\sqrt L}\ge4\delta.
\]
Thus \zcref{cor:averaged} applies with $\eta=2\delta$, first with $m=3$ and then with
$m=4$. Substituting the resulting bounds into \eqref{eq:aggregate} yields
\[
 |P_A'(\lambda)|
 \le C\left[
 b^{-2}\delta L^{3/2}\frac{\lambda}{\lambda+t}
 +
 b^{-3}\delta^2L^2(\lambda+t)^{-3/2}
 \right],
\]
which is \eqref{eq:path-bound}.
\end{proof}

Since
\[
 \int_0^1\frac{\lambda}{\lambda+t}\,\mathrm d\lambda\le1,
 \qquad \int_0^1(\lambda+t)^{-3/2}\,\mathrm d\lambda\le2t^{-1/2},
\]
integration of \zcref{lem:path} gives
\begin{equation}\label{eq:integrated}
 \sup_{A\in\cR_p}|P_A(1)-P_A(0)|
 \le Cb^{-2}r+Cb^{-3}\frac{\delta^2L^2}{\sqrt t}
 \le Cb^{-2}r.
\end{equation}

\begin{proof}[Proof of \zcref{thm:main}]
It remains to remove the Gaussian smoothing at the two endpoints of the interpolation.
For deterministic $z\in\R^p$, set $a=\norminf z$. Then
\[
 A^{-a}\subseteq A\cap(A-z),
 \qquad
 A\cup(A-z)\subseteq A^a.
\]
Hence a change in membership under translation by $z$ can occur only in the boundary layer
$A^a\setminus A^{-a}$.

Applying \zcref{lem:concentration} to $W$ therefore gives
\[
 |\Pp(W+z\in A)-\Pp(W\in A)|
 \le C(a+\delta)\sqrt L.
\]
Similarly, \zcref{cor:gauss-shell} gives, for $G_\Sigma\sim N(0,\Sigma)$,
\[
 |\Pp(G_\Sigma+z\in A)-\Pp(G_\Sigma\in A)|
 \le Ca\sqrt L.
\]

Now let $G_{t\Sigma}\sim N(0,t\Sigma)$ be independent. Since each of its coordinates has variance
$t$,
\[
 \E\norminf{G_{t\Sigma}}\le C\sqrt{tL}.
\]
Averaging the preceding inequalities over $G_{t\Sigma}$ yields
\[
 \rho_{\cR}(W,W+G_{t\Sigma})
 \le C(\sqrt t\,L+\delta\sqrt L),
\]
and
\[
 \rho_{\cR}(G_\Sigma,G_{(1+t)\Sigma})
 \le C\sqrt t\,L,
\]
where we used
$G_\Sigma+G_{t\Sigma}\stackrel{d}{=}G_{(1+t)\Sigma}$.

By construction,
\[
 P_A(0)=\Pp(W+G_{t\Sigma}\in A),
 \qquad
 P_A(1)=\Pp(G_{(1+t)\Sigma}\in A).
\]
Thus \eqref{eq:integrated} and the triangle inequality give
\[
 \rho_{\cR}(W,Z)
 \le
 \sup_{A\in\cR_p}|P_A(1)-P_A(0)|
 +C(\sqrt t\,L+\delta\sqrt L)
 \le Cb^{-2}r.
\]
Indeed, the choice $t\asymp\delta^2L/b^2$ gives
\[
 \sqrt t\,L+\delta\sqrt L\le Cb^{-1}r\le Cb^{-2}r.
\]
Finally, $\rho_{\cR}(W,Z)\le1$ and $\delta=B/\sqrt n$, so, after increasing the absolute
constant,
\[
 \rho_{\cR}(W,Z)
 \le
 C\min\left\{1,\frac{B\log^{3/2}(ep)}{b^2\sqrt n}\right\}.
\]
\end{proof}

\bibliographystyle{plain}
\bibliography{sharp_rectangle_clt}
\end{document}